\documentclass[11pt]{amsart}
\usepackage{geometry}                
\geometry{letterpaper}                   
\usepackage{graphicx}
\usepackage{amssymb}
\usepackage{epstopdf}
\newtheorem{theorem}{Theorem}
\newtheorem{corollary}[theorem]{Corollary}
\title[A local spectral condition for strong compactness] {A local spectral condition for strong compactness with some applications to  bilateral weighted shifts}
\author{Miguel Lacruz and Mar\'{\i}a del Pilar Romero de la Rosa}
\begin{document}
\renewcommand{\thefootnote}{}
\begin{abstract}
An algebra of bounded linear operators on a Banach space is said to be {\em strongly compact} if its unit ball is precompact in the strong operator topology, and a bounded linear operator on a Banach space is said to be {\em strongly compact} if the algebra with identity generated by the operator is strongly compact. Our interest in this notion  stems from  the work of Lomonosov on the existence of invariant subspaces.  We provide a local spectral condition that is sufficient for a bounded linear operator on a Banach space to be strongly compact. This condition is then applied to describe a large class of strongly compact, injective bilateral weighted shifts on Hilbert spaces, extending earlier work of Fern\'andez-Valles and the first author. Further applications are also derived, for instance, a strongly compact, invertible bilateral weighted shift is constructed in such a way that its inverse fails to be a strongly compact operator.
\end{abstract}
\footnote{2000 {\em Mathematics Subject Classification.} Primary  47B07; Secondary 47B37, 47L10.}
\footnote{{\em Keywords and phrases:} Strongly compact operator, local spectral condition, bilateral weighted shift.}
\footnote{This work was partially supported by Ministerio de Ciencia e Innovaci\'on under project MTM 2009-08934, and by Junta de Andaluc\'{\i}a under   grant FQM-3737 .}
\maketitle
\section{Introduction}
\label{intro}
\noindent
Let \(\mathcal{B}(X)\) denote the algebra of bounded linear operators on a complex Banach space \(X\). A subalgebra \(\mathcal{R} \subseteq \mathcal {B}(X)\) is said to be {\em strongly compact} if its unit ball \(\{R \in \mathcal{R}: \|R\| \leq 1\}\) is precompact in the strong operator topology, and an operator \(T \in \mathcal{B}(X)\) is said to be {\em strongly compact} if the algebra    with identity generated by \(T\) is strongly compact. 

Our interest in this notion stems from the work of Lomonosov \cite{lomonosov} on the existence of invariant subspaces  for essentially normal operators on Hilbert space. He showed that if \(T\) is an essentially normal operator on a Hilbert space such that both its  commutant \(\{T\}^\prime\) and the commutant  \(\{T^\ast\}^\prime\) of its adjoint fail to be  strongly compact algebras then \(T\) has a nontrivial invariant subspace, and moreover, if both \(T\) and \(T^\ast\) fail to be strongly compact operators then \(T\) has a nontrivial hyperinvariant subspace.

Lomonosov, Radjavi and Troitsky \cite{LRT} showed more recently that if \(T\) is an operator  such that its commutant \(\{T\}^\prime\) is a localizing algebra and the commutant \(\{T^\ast\}^\prime\) of its adjoint is a strongly compact algebra, then the adjoint  \(T^\ast\) has an invariant subspace.

A characterization of strongly compact, normal operators was given by Lomonosov, Rodr\'{\i}guez-Piazza, and the first author \cite{LLR} by a suitable application of the spectral theorem. Necessary and sufficient conditions were also provided for a unilateral weighted shift to be strongly compact in terms of the sliding products of its weights. Some applications were derived, for instance, the restriction of a strongly compact operator to an invariant subspace need not be a strongly compact operator.

Prunaru \cite{prunaru} used a particular case of a more general result obtained by Lomonosov \cite{lomonosov} to show that for a pure hyponormal, essentially normal operator \(T\) on a Hilbert space, the commutant \(\{T^\ast\}^\prime\) of its adjoint  is a strongly compact algebra.

Rodr\'{\i}guez-Piazza and the first author \cite{LRP}  showed that the position operator on the space of square integrable functions with respect to a finite measure of compact support is strongly compact if and only if the restriction of the measure to the exterior boundary of  its support is purely atomic. (A similar result was  obtained earlier on by Froelich and  Marsalli \cite{FM} within the framework of function algebras.) Further applications were derived, for instance, the weakly closed algebra generated by a strongly compact normal operator need not be a strongly compact algebra.

A classification of operator algebras was provided by Marsalli \cite{marsalli}.  He gave some sufficient conditions for an algebra of operators to be strongly compact. He  showed  that if an operator has a set of  eigenvectors that generate a dense linear manifold, then the operator is strongly compact, and moreover, if  all  the corresponding eigenvalues   have finite multiplicity, then the commutant of the operator is  strongly compact. Quite recently, Fern\'andez-Valles and the first author \cite{FVL} applied this condition to test strong compactness for several  classes of operators, namely, Ces\`aro operators, bilateral weighted shifts, and composition operators. 

A condition of a different nature seems to be needed to prove strong compactness in absence of eigenvalues. The purpose of this paper is to provide a local spectral condition that is sufficient for an operator on a Banach space to be strongly compact.  The condition requires from the operator that the origin must lie in the interior of its full spectrum and that there must be a spanning set of vectors where the local spectral radius is smaller than the distance from the origin to the boundary of the full spectrum.  This condition can be applied to operators with no eigenvalues and it  fits like a glove to bilateral weighted shifts.

As an application, we  describe a large class of strongly compact, bilateral weighted shifts on Hilbert spaces, completing the work of Fern\'andez-Valles and the first author \cite{FVL}. Then, we restrict our attention to invertible bilateral weighted shifts and we derive a condition for such operators to have a strongly compact inverse. Next, we show that if an invertible bilateral weighted shift and its inverse are strongly compact operators then the algebra generated by both of them is strongly compact. Finally, we give an example of a strongly compact, invertible bilateral weighted shift whose inverse fails to be strongly compact.

\section{A local spectral condition for strong compactness}
\label{spectral}
\noindent 
Let \(T\) denote an operator on a complex Banach space \(X\) and let \(\sigma (T)\) denote its spectrum. The {\em spectral radius} of \(T\) is defined as \(r(T)= \max \{ |\lambda|: \lambda \in \sigma (T)\}\). The spectral radius formula provides the alternative expression
\[
r(T)=\lim_{n \rightarrow \infty} \|T^n\|^{1/n}.
\]
The {\em local spectral radius} of \(T\) at a vector \(x \in X\) is defined as
\[
r(x,T)=\limsup_{n \rightarrow \infty} \|T^nx\|^{1/n}.
\]
It is clear that \(r(x,T) \leq r(T)\) for every \(x \in X\). Dane\v{s} \cite{danes} showed that the set of vectors \(x \in X\) that satisfy the strict inequality \(r(x,T) < r(T)\) is of the first category. See also  the paper of M\"{u}ller \cite{Muller}  for more information on this remarkable fact and  other related results.

Let \(G\) denote the unbounded connected component of \(\mathbb{C} \backslash \sigma (T)\). The {\em full spectrum} of \(T\) is the set \(\eta(\sigma(T))= \mathbb{C} \backslash G\). It is plain that \(\sigma (T) \subseteq
\eta(\sigma(T))\) and that \(\mathbb{C} \backslash \eta(\sigma(T))\) is connected. Also, it follows from the maximum modulus principle that if \(p\) is any polynomial then
\[
\max \{ \,|p(\lambda)|: \lambda \in \sigma (T)\} =
\max \{ \,|p(\lambda)|: \lambda \in \eta( \sigma (T))\}.
\]

Recall that \(S \subseteq X\) is a {\em spanning set} if \(S\) generates a dense linear subspace of \(X\). It turns out that a subalgebra \({\mathcal R} \subseteq {\mathcal B}(X)\) is strongly compact if and only if there is a spanning set \(S \subseteq X\) such that \(\{Rx: R \in {\mathcal R}, \, \|R\| \leq 1\} \) is precompact for every \(x \in S\). The  main result on this paper relies on this simple fact. See the paper of Lomonosov, Rodr\'{\i}guez-Piazza and the first author \cite{LLR} for a proof of it, and see also the paper of Marsalli \cite{marsalli} for the same result under a slightly different formulation.

\begin{theorem}
\label{main} 
Let \(T\) be an operator on a complex Banach space \(X\) with \(0 \in {\rm int\,} \eta ( \sigma (T))\) and suppose that there is a spanning set \(S \subseteq X\)  such that \( r(x,T)
< d(0,\partial \eta ( \sigma (T)))\) for every \(x \in S\). Then the operator \(T\) is  strongly compact.
\end{theorem}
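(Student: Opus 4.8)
The plan is to verify the hypothesis of the characterization of strong compactness recalled just before the statement: it suffices to exhibit a spanning set---namely the given set \(S\)---such that for every \(x \in S\) the orbit \(\{Rx : R \in \mathcal{R},\ \|R\| \le 1\}\) is precompact, where \(\mathcal{R}\) is the algebra with identity generated by \(T\). Since the polynomials in \(T\) are norm-dense in \(\mathcal{R}\), and a routine scaling argument shows that the orbit of the unit ball of \(\mathcal{R}\) is contained in the norm-closure of the orbit of the unit ball of the polynomial algebra, I would reduce the problem to proving that the set
\[
K_x = \{\, p(T)x : p \text{ a polynomial},\ \|p(T)\| \le 1 \,\}
\]
is precompact for each \(x \in S\).

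The crux is to control the Taylor coefficients of an admissible polynomial. Write \(d = d(0, \partial \eta(\sigma(T)))\), which is positive because \(0 \in \mathrm{int}\,\eta(\sigma(T))\). Since \(\mathbb{C} \setminus \eta(\sigma(T))\) is the unbounded component \(G\) of the resolvent set and \(0 \notin \overline{G}\), the distance from \(0\) to \(G\) equals \(d\), so the closed disk \(\overline{D(0,d)}\) is contained in \(\eta(\sigma(T))\). Now for any polynomial \(p\), the spectral mapping theorem together with the lower bound \(r(p(T)) \le \|p(T)\|\) gives \(\max_{\lambda \in \sigma(T)} |p(\lambda)| \le \|p(T)\|\), while the maximum modulus principle (in the form quoted above) upgrades this to \(\max_{\lambda \in \eta(\sigma(T))}|p(\lambda)| \le \|p(T)\|\). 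Thus \(\|p(T)\| \le 1\) forces \(|p(\lambda)| \le 1\) on \(\overline{D(0,d)}\), and Cauchy's estimates on the circle \(|\lambda| = d\) yield \(|a_k| \le d^{-k}\) for every coefficient \(a_k\) of \(p\).

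With the coefficient bound in hand, I would split \(p(T)x = \sum_k a_k T^k x\) into a head and a tail. Fix \(x \in S\) and choose \(\rho\) with \(r(x,T) < \rho < d\); the definition of the local spectral radius gives \(\|T^k x\| \le \rho^k\) for all large \(k\), whence \(\|a_k T^k x\| \le (\rho/d)^k\). Since \(\rho/d < 1\), this bound is summable and independent of \(p\), so the tails \(\sum_{k \ge N} a_k T^k x\) are uniformly small as \(N \to \infty\). The heads \(\sum_{k<N} a_k T^k x\) all lie in the finite-dimensional subspace spanned by \(x, Tx, \dots, T^{N-1}x\) and are uniformly bounded, hence form a precompact set. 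Approximating \(K_x\) by these precompact sets of heads shows that \(K_x\) is totally bounded, and completeness of \(X\) yields precompactness.

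I expect the main obstacle to be the coefficient estimate, that is, the passage from the operator-norm bound \(\|p(T)\| \le 1\) to the uniform scalar bound \(|a_k| \le d^{-k}\). This is precisely where the full spectrum and the maximum modulus principle are indispensable: the naive bound only controls \(|p|\) on \(\sigma(T)\), which may have empty interior (as for weighted shifts), whereas filling in the holes produces a genuine disk \(\overline{D(0,d)}\) on which Cauchy's inequalities can be applied. Once this is secured, the head/tail mechanism---driven by the strict inequality \(r(x,T) < d\)---is routine.
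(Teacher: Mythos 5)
Your proposal is correct and follows essentially the same route as the paper's proof: the same reduction to precompactness of the orbits $\{p(T)x : \|p(T)\|\le 1\}$ for $x$ in the spanning set, the same coefficient bound $|a_k|\le \|p(T)\|/d^k$ obtained from the spectral mapping theorem, the maximum modulus principle on the full spectrum, and Cauchy's estimates on the circle $|\lambda|=d$, and the same head/tail decomposition driven by $r(x,T)<\rho<d$. The only difference is cosmetic: you spell out the uniform boundedness of the heads, a point the paper leaves implicit.
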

\begin{proof}
First, consider the positive distance \(d = d(0, \partial \eta ( \sigma (T)))= \min \{ |\lambda| : \lambda \in \partial \eta ( \sigma (T))\} \). Since \(0 \in  {\rm int\,} \eta ( \sigma (T))\), we have \(\{\lambda \in \mathbb{C} : |\lambda| \leq d \}\subseteq \eta ( \sigma (T))\). It follows from the above remarks and the spectral mapping theorem that for any polynomial \(p\) we have
\begin{eqnarray*}
\max \{|p(\lambda)|: |\lambda|=d \} & \leq & \max \{ \,|p(\lambda)|: \lambda \in \eta( \sigma (T))\}\\ 
& = & \max \{ \,|p(\lambda)|: \lambda \in \sigma (T)\}\\
& = & r(p(T)) \leq \|p(T)\|.
\end{eqnarray*}
This inequality allows us  to control the size of the coefficients for a complex polynomial \(p(\lambda)=a_0+a_1\lambda + \cdots + a_m \lambda^m\). Indeed,  using Cauchy's integral formula for the derivatives of order \(n=0,1, \ldots, m\) gives
\[
a_n = \frac{p^{(n)}(0)}{n!} = \frac{1}{2 \pi i} \int_{|\lambda|=d} \,\frac{p(\lambda)}{\lambda^{n+1}}\,d\lambda,
\]
and from here we get the estimate
\[
|a_n| \leq \frac{\|p(T)\|}{d^n}\,.
\]
Take a vector \(x \in S\). We need to show that the set \( \{p(T)x: \textrm{\(p\) is a polynomial}, \,\|p(T)\| \leq 1\} \) is precompact. Choose some \(c>0\) with \(r(x,T)<c<d\). Since \(r(x,T)= \limsup \|T^nx\|^{1/n}\) as \(n \rightarrow \infty\), there is an \(n_0 \) such that \(\|T^nx\| \leq c^n\) whenever \(n \geq n_0\). Let \(\varepsilon >0\) and choose an \(n_1 \geq n_0\) such that
\[
\sum_{n=n_1+1}^\infty \left (\frac{c}{d} \right )^n < \varepsilon.
\]
Finally, consider the finite dimensional subspace \(F={\rm span\,} \{ x, Tx, \ldots , T^{n_1} x\}\). We claim that \(p(T)x \in F + \varepsilon B_X\). This is obvious if \(m \leq n_1\). Otherwise, \(p(T)x\) can be expressed as the sum of two terms,
\[
p(T)x = \sum_{n=0}^{n_1} a_nz^n + \sum_{n=n_1+1}^m a_nz^n.
\]
The first term  belongs to \(F\), while the second term satisfies the estimate
\[
\left \| \sum_{n=n_1+1}^m a_nT^nx \right \| \leq
\sum_{n=n_1+1}^m |a_n| \cdot \|T^nx\| \leq \sum_{n=n_1+1}^m\left (\frac{c}{d} \right )^n < \varepsilon,
\]
which completes the proof of our claim.
\end{proof}
It is easy to see that strong compactness is preserved under similarities, that is, if \(T\) is a strongly compact operator, \(C\) is invertible, and \(T=C^{-1}RC\) then \(R\) is strongly compact. Notice that the assumption in Theorem \ref{main} is also preserved under similarities. More precisely, if  \(T\) satisfies the condition in Theorem \ref{main}, \(C\) is invertible, and \(T=C^{-1}RC\), then \(\sigma(T)=\sigma(R)\), \(\{Cx: x \in S\}\) is a spanning set, and
\(r(Cx, R) \leq r(x,T) < d(0, \partial \eta (\sigma(T))) =d(0, \partial \eta (\sigma(R)))\) for every \(x \in S\), so that \(R\) also satisfies the condition in Theorem \ref{main}.

We now recall a couple of definitions from general operator theory. 
If \(T\) is any operator on a Banach space \(X\) then the {\em lower bound} of \(T\) is defined as
\[
m(T)= \inf \{ \|Tx\|: x \in X, \|x\| \leq 1\}.
\]
We also consider the quantity
\[
r_1(T)=\sup_{n \geq 1}m(T^n)^{1/n}=\lim_{n \rightarrow \infty}m(T^n)^{1/n}.
\]
It turns out that if \(T\) is invertible then \(\|T^{-1}\|=m(T)^{-1}\) and \(r(T^{-1})=r_1(T)^{-1}\).
\section{Applications to bilateral weighted shifts}
\label{shifts}
\noindent
Let \(W\) be an injective bilateral weighted shift on an infinite dimensional, separable
complex Hilbert space \(H\), that is,
\[
We_n = w_ne_{n+1},
\]
where \((e_n)\) is an orthonormal basis of \(H\), the weight sequence \((w_n)\) is bounded,
\(n\) runs through the integers, and \(w_n \neq 0\) for every \(n\). We refer to  the survey by Allen Shields \cite{shields} for information on the spectral parts of weighted shifts. 
See also the paper of  Bourhim \cite{bourhim} for local spectral properties of weighted shifts.
Now consider the quantities
\[
r^{-}(W)= \lim_{n \rightarrow \infty} \sup_{k>0} |w_{-n-k} \cdots w_{-k+1}|^{1/n}, \hskip3em
r^{+}(W)= \lim_{n \rightarrow \infty} \sup_{k \geq 0} |w_{k} \cdots w_{n+k-1}|^{1/n},
\]
\[
r_1^{-}(W)= \lim_{n \rightarrow \infty} \inf_{k>0} |w_{-n-k} \cdots w_{-k+1}|^{1/n}, \hskip3em
r_1^{+}(W)= \lim_{n \rightarrow \infty} \inf_{k \geq 0} |w_{k} \cdots w_{n+k-1}|^{1/n},
\]
\[
r_2^{-}(W)= \liminf_{n \rightarrow \infty} |w_{-1} \cdots w_{-n}|^{1/n}, \hskip3em
r_2^{+}(W)= \liminf_{n \rightarrow \infty} |w_0 \cdots w_{n-1}|^{1/n},
\]
\[
r_3^{-}(W)= \limsup_{n \rightarrow \infty} |w_{-1} \cdots w_{-n}|^{1/n}, \hskip3em
r_3^{+}(W)= \limsup_{n \rightarrow \infty} |w_0 \cdots w_{n-1}|^{1/n}.
\]
Then the following relationships are fulfilled
\[
r_1^{-}(W) \leq r_2^-(W) \leq r_3^-(W) \leq r^-(W), \hskip3em r_1^+(W) \leq r_2^+(W) \leq r_3^+(W) \leq r^+(W),
\]
\[
r(W)=\max \{r^-(W),r^+(W)\}, \hskip3em r_1(W)= \min \{r_1^{-}(W), r_1^+(W)\}.
\]

It was shown by Fern\'andez-Valles and the first author \cite{FVL} that if a bilateral weighted shift \(W\) satisfies the inequality \(r_3^+(W)<r_2^{-}(W)\) then its commutant \(\{W\}'\) is strongly compact, and it remained an open question whether or not \(W\) is strongly compact when \(r_2^-(W) \leq r_3^+(W)\). The following result provides an affirmative answer to this question for quite a large class of bilateral weighted shifts.
\begin{theorem}
\label{shift}
If a bilateral weighted shift \(W\) on an infinite dimensional, separable
complex Hilbert space satisfies the inequality \(r_3^+(W)<r(W)\) then \(W\) is a strongly compact operator.
\end{theorem}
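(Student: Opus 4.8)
The plan is to verify the hypotheses of Theorem \ref{main} with the orthonormal basis playing the role of the spanning set. The first step is to identify the full spectrum of \(W\). Conjugating \(W\) by the unitary \(U_\mu\) defined by \(U_\mu e_n = \mu^n e_n\) for \(|\mu| = 1\) gives \(U_\mu W U_\mu^{-1} = \mu W\), so \(\sigma(W)\) is invariant under every rotation about the origin; as recorded in Shields' survey \cite{shields}, it is therefore a circularly symmetric set whose outer radius is \(r(W)\). Hence the unbounded component of \(\mathbb{C}\setminus\sigma(W)\) is \(G = \{\lambda : |\lambda| > r(W)\}\), and the full spectrum \(\eta(\sigma(W)) = \mathbb{C}\setminus G\) is the closed disk \(\{\lambda : |\lambda| \le r(W)\}\). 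Since the hypothesis forces \(r(W) > r_3^+(W) \ge 0\), we obtain \(0 \in \mathrm{int}\,\eta(\sigma(W))\) together with the clean identity \(d(0, \partial\eta(\sigma(W))) = r(W)\).

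The second step is to compute the local spectral radius at each basis vector. From \(W^n e_j = (w_j w_{j+1}\cdots w_{j+n-1})\,e_{j+n}\) we read off \(\|W^n e_j\| = |w_j\cdots w_{j+n-1}|\), whence \(r(e_j, W) = \limsup_{n} |w_j\cdots w_{j+n-1}|^{1/n}\). For \(j = 0\) this is precisely \(r_3^+(W)\). For an arbitrary index \(j\) the product \(|w_j\cdots w_{j+n-1}|\) agrees with \(|w_0\cdots w_{j+n-1}|\) up to the fixed nonzero factor contributed by the finitely many weights with index outside \(\{0, 1, \dots\}\); since that factor has \(n\)-th root tending to \(1\) and shifting the exponent \(n\) by the constant \(j\) does not alter the \(\limsup\) of the \(n\)-th roots, I expect to conclude \(r(e_j, W) = r_3^+(W)\) for every \(j\in\mathbb{Z}\). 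Intuitively this reflects that \(W\) shifts to the right, so the asymptotics of \(W^n e_j\) are always governed by the forward products of the weights.

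With these computations in hand the conclusion is immediate: the basis \(S = \{e_n : n\in\mathbb{Z}\}\) is a spanning set, and the hypothesis \(r_3^+(W) < r(W)\) gives
\[
r(e_n, W) = r_3^+(W) < r(W) = d\bigl(0, \partial\eta(\sigma(W))\bigr)
\]
for every \(n\), so Theorem \ref{main} applies and shows that \(W\) is strongly compact. The only genuinely delicate point is the first step: one must be sure that the full spectrum is exactly the closed disk of radius \(r(W)\), so that the distance \(d(0, \partial\eta(\sigma(W)))\) equals \(r(W)\) rather than some smaller inner radius of the spectral annulus; the circular symmetry of \(\sigma(W)\) is what guarantees this. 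The second-step limit identity is routine once the index-shift is handled carefully.
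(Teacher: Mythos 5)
Your proposal is correct and follows essentially the same route as the paper: take the orthonormal basis as the spanning set, identify the full spectrum as the closed disk of radius \(r(W)\) so that \(d(0,\partial\eta(\sigma(W)))=r(W)\), show \(r(e_j,W)=r_3^+(W)\) for every \(j\) by reducing the products \(|w_j\cdots w_{j+n-1}|\) to the forward products from index \(0\) up to a fixed nonzero factor, and invoke Theorem \ref{main}. The only differences are cosmetic: you justify the disk shape of the full spectrum via rotation invariance (the paper simply asserts it, citing Shields), and your index-shift bookkeeping is stated a little loosely (for \(j>0\) the fixed factor is a quotient by \(|w_0\cdots w_{j-1}|\), not a factor from negatively indexed weights), but the computation is the same one the paper carries out case by case.
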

\begin{proof}
The full spectrum of \(W\) is the closed disk of radius \(r(W)\) centered at the origin and the orthornormal basis \((e_k)\) is a spanning subset of \(H.\) Thus, it suffices to show that \(r(e_k,W)=r_3^+(W)\) for each \(k\). This is trivial when \(k =0\) because \(\|W^ne_0\|=|w_0 \cdots w_{n-1}|\) for every \(n \geq 1,\) so that
\[
r(e_0,W)=\limsup_{n \rightarrow \infty} \|W^ne_0\|^{1/n} = \limsup_{n \rightarrow \infty} |w_0 \cdots w_{n-1}|^{1/n}=r_3^+(W).
\]
Then, suppose that \(k \neq 0\) and notice that \(\|W^ne_k\|=|w_k \cdots w_{n+k-1}|\) for every \(n \geq 1.\) Now, there are two possibilities. On the one hand, if \(k>0\) then
\[
\|W^ne_k\| = \frac{|w_0 \cdots w_{n+k-1}|}{|w_0 \cdots w_{k-1}|}=\frac{\|W^{n+k}e_0\|}{\|W^ke_0\|},
\]
so that
\begin{eqnarray*}
r(e_k,W)    &=&\limsup_{n \rightarrow \infty} \|W^ne_k\|^{1/n}
             = \limsup_{n \rightarrow \infty} \left ( \frac{\|W^{n+k}e_0\|}{\|W^ke_0\|} \right )^{1/n}\\
            \\
            &=&\limsup_{n \rightarrow \infty} \|W^{n+k}e_0\|^{1/n}
             = \limsup_{n \rightarrow \infty} \left ( \|W^{n+k}e_0\|^{1/(n+k)} \right )^{(n+k)/n}=r(e_0,W).\\
\end{eqnarray*}
On the other hand, if \(k<0\) and \(n>-k\) then
\[
\|W^ne_k\| = |w_k \cdots w_{-1}| \cdot |w_0 \cdots w_{n+k-1}|=\|W^{-k}e_k\| \cdot \|W^{n+k}e_0\|,
\]
so that
\begin{eqnarray*}
r(e_k,W)    &=&\limsup_{n \rightarrow \infty} \|W^ne_k\|^{1/n}
             = \limsup_{n \rightarrow \infty} \left ( \|W^{-k}e_k\| \cdot \|W^{n+k}e_0\| \right )^{1/n}\\
            &=&\limsup_{n \rightarrow \infty} \|W^{n+k}e_0\|^{1/n}
             = \limsup_{n \rightarrow \infty} \left ( \|W^{n+k}e_0\|^{1/n+k} \right )^{(n+k)/n}=r(e_0,W),
\end{eqnarray*}
as we wanted.
\end{proof}
\begin{corollary}
\label{inverse1}
Let  \(W\) be an invertible bilateral weighted shift on an infinite dimensional, separable complex Hilbert space. If \(W\) satisfies the inequality \(r_1(W) < r_2^-(W)\) then its inverse \(W^{-1}\) is a strongly compact operator.
\end{corollary}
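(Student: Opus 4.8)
The plan is to realize the inverse $W^{-1}$ as a forward bilateral weighted shift and then invoke Theorem \ref{shift}. Since $We_n = w_ne_{n+1}$, the inverse acts by $W^{-1}e_n = w_{n-1}^{-1}e_{n-1}$, which is a backward shift; to turn it into a forward shift I would apply the flip $U$ defined by $Ue_n = e_{-n}$, a self-adjoint unitary with $U^2 = I$. A direct computation gives $UW^{-1}Ue_n = w_{-n-1}^{-1}e_{n+1}$, so that $V := UW^{-1}U$ is the forward bilateral weighted shift with weights $v_n = w_{-n-1}^{-1}$. Because strong compactness is preserved under similarities (as recorded after Theorem \ref{main}), and because invertibility of $W$ forces each $v_n\neq 0$, it suffices to prove that $V$ is strongly compact, and for this I would verify the hypothesis $r_3^+(V) < r(V)$ of Theorem \ref{shift}.

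Next I would compute the two relevant quantities for $V$. Since $U$ is unitary we have $\sigma(V) = \sigma(W^{-1})$, and using the relation $r(T^{-1}) = r_1(T)^{-1}$ recorded in the excerpt together with $r_1(W) = \min\{r_1^-(W), r_1^+(W)\}$ gives $r(V) = r(W^{-1}) = r_1(W)^{-1}$. For the other quantity, the forward products of the weights of $V$ telescope into products of the original weights, $v_0v_1\cdots v_{n-1} = (w_{-1}w_{-2}\cdots w_{-n})^{-1}$, so that $|v_0\cdots v_{n-1}|^{1/n} = |w_{-1}\cdots w_{-n}|^{-1/n}$. Taking the limit superior and using that $t\mapsto 1/t$ interchanges limit superior and limit inferior, I obtain
\[
r_3^+(V) = \limsup_{n\to\infty}|w_{-1}\cdots w_{-n}|^{-1/n} = \Bigl(\liminf_{n\to\infty}|w_{-1}\cdots w_{-n}|^{1/n}\Bigr)^{-1} = r_2^-(W)^{-1}.
\]

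It then remains only to compare the two. Since invertibility of $W$ forces $r_1(W) > 0$, both quantities are positive, and the hypothesis $r_1(W) < r_2^-(W)$ is equivalent to $r_2^-(W)^{-1} < r_1(W)^{-1}$, that is, to $r_3^+(V) < r(V)$. Theorem \ref{shift} then yields that $V$, and hence $W^{-1}$, is strongly compact. I expect the only delicate points to be bookkeeping rather than conceptual: getting the index reversal in the weights $v_n = w_{-n-1}^{-1}$ exactly right, and correctly converting the limit superior of the reciprocals into the reciprocal of the limit inferior, so that $r_3^+(V)$ lands on $r_2^-(W)^{-1}$ and not on $r_3^-(W)^{-1}$. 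An alternative, essentially equivalent route would bypass the flip and apply Theorem \ref{main} directly to $W^{-1}$, whose full spectrum is the disk of radius $r_1(W)^{-1}$ centered at the origin, after checking that $r(e_k, W^{-1}) = r_2^-(W)^{-1}$ for every $k$ by the same telescoping argument used in the proof of Theorem \ref{shift}.
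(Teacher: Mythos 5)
Your proposal is correct and follows essentially the same route as the paper: conjugating $W^{-1}$ by the flip unitary $U e_n = e_{-n}$ to obtain the forward shift $V$ with weights $v_n = w_{-n-1}^{-1}$, computing $r_3^+(V) = r_2^-(W)^{-1}$ and $r(V) = r_1(W)^{-1}$, and applying Theorem \ref{shift} via the similarity-invariance remark. The only difference is that you spell out the ``quick computation'' the paper leaves implicit, which is a virtue rather than a deviation.
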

\begin{proof}
If \(W\) is an invertible bilateral weighted shift with sequence of weights \((w_n)\) then
\[
W^{-1}e_n=\frac{1}{w_{n-1}}e_{n-1}.
\]
Consider the unitary operator \(U\) defined by \(Ue_n=e_{-n}\), so that \(U^\ast=U=U^{-1}\). Now define \(V=U^\ast W^{-1}U\) and notice that \(V\) is another bilateral weighted shift, namely, \(Ve_n=v_ne_{n+1}\), where the sequence of weights \((v_n)\) is given by
\[
v_n=\frac{1}{w_{-(n+1)}}.
\]
According with the remarks at the end of the proof of Theorem \ref{main},  strong compactness is preserved under similarities. Hence, it suffices to show that   \(V\) is strongly compact.
Then, a quick computation yields \(r_3^+(V)=r_2^-(W)^{-1} < r_1(W)^{-1}=r(V).\)
\end{proof}
\begin{theorem}
\label{rational}
Let \(W\) be an invertible bilateral weighted shift on an infinite dimensional, separable complex Hilbert space. If both \(W\) and \(W^{-1}\) are strongly compact then the algebra generated by \(W\) and \(W^{-1}\) is strongly compact.
\end{theorem}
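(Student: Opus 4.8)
The plan is to verify strong compactness of the algebra $\mathcal{A}$ generated by $W$ and $W^{-1}$ through the spanning-set criterion recorded just before Theorem \ref{main}, taking the orthonormal basis $(e_k)$ as the spanning set. It then suffices to prove that, for each fixed $k$, the orbit $\{Re_k : R\in\mathcal{A},\ \|R\|\le1\}$ is precompact in $H$. A typical element of $\mathcal{A}$ is a Laurent polynomial $R=\sum_n a_nW^n$, and since each $W^ne_k$ is a scalar multiple of $e_{k+n}$, the vector $Re_k$ splits as
\[
Re_k=\sum_{n\ge0}a_nW^ne_k+\sum_{n<0}a_nW^ne_k,
\]
a forward part supported on $\{e_j:j\ge k\}$ and a backward part supported on $\{e_j:j<k\}$. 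These two parts are orthogonal, so each has norm at most $\|Re_k\|\le\|R\|\le1$. Hence it is enough to show that the set of forward parts and the set of backward parts are each precompact.

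I would treat the two parts symmetrically by means of the flip $U$ from the proof of Corollary \ref{inverse1}. Writing $V=U^\ast W^{-1}U$, the unitary $U$ carries the backward part of $Re_k$ for $W$ onto a forward part for the shift $V$; and since $W^{-1}$ is strongly compact and strong compactness is preserved under similarity, $V$ is strongly compact as well. Thus the backward problem for $W$ is exactly a forward problem for $V$, and it suffices to establish a single assertion: for a strongly compact, invertible bilateral weighted shift, the forward orbits $\{\sum_{n\ge0}a_nW^ne_k:\|R\|\le1\}$ form a precompact set. Applying this to $W$ and to $V$ then disposes of both parts.

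For this I would first control the coefficients. Because the spectrum of a bilateral weighted shift is circularly symmetric, the circle $|\lambda|=r(W)$ lies in $\sigma(W)$; combining the spectral mapping theorem with Cauchy's formula on this circle, exactly as in the proof of Theorem \ref{main}, gives $|a_n|\le\|R\|/r(W)^n$ for every $n\ge0$. The forward part is then $\sum_{n\ge0}a_nW^ne_k$ with geometrically controlled coefficients, and its precompactness comes down to a tail estimate that is uniform over the unit ball $\|R\|\le1$.

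The hard part is precisely this uniform tail estimate. The obvious temptation---to regard the forward part as $p(W)e_k$, where $p$ is the analytic part of the symbol, and to place it inside a fixed multiple of the compact set $\{p(W)e_k:\|p(W)\|\le1\}$ furnished by the strong compactness of $W$---fails, because the operator norm of the analytic part is \emph{not} controlled by $\|R\|$. This is the unboundedness of the analytic (Riesz) projection, already visible for the (non strongly compact) bilateral shift, where $\|p(W)\|\le C\|R\|$ is simply false; orthogonality gives only $\|p(W)e_k\|\le\|R\|$, not a bound on $\|p(W)\|$. The resolution must instead combine the coefficient decay $|a_n|\le r(W)^{-n}$ with the full strength of the strong compactness of $W$ to produce forward-tail bounds that are uniform over the unit ball of $\mathcal{A}$; pinning down and proving that uniform estimate is the technical heart of the argument. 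Once the forward parts (and hence, via $V$, the backward parts) are known to be precompact, $\{Re_k : \|R\|\le1\}$ lies in the sum of two precompact sets and is therefore precompact, which completes the verification.
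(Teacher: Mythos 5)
Your proposal is not a complete proof, and you say so yourself: the ``uniform tail estimate'' for the forward parts is precisely the content of the theorem, and it is never established. What your tools actually prove is a weaker statement. The coefficient bound \(|a_n|\le\|R\|/r(W)^n\) is correct (the spectrum of a bilateral weighted shift is circularly symmetric, so the circle \(|\lambda|=r(W)\) lies in \(\sigma(W)\); the spectral mapping theorem for Laurent polynomials plus Cauchy's estimates on that circle give the bound for all \(n\in\mathbb{Z}\)), but it yields uniform geometric tails \(\sum_{n>N}|a_n|\,\|W^ne_k\|\le\sum_{n>N}(c/r(W))^n\) only when \(\limsup_n\|W^ne_k\|^{1/n}=r_3^+(W)<r(W)\); the backward parts, handled through the flip \(V=U^\ast W^{-1}U\), likewise require \(r_3^+(V)=r_2^-(W)^{-1}<r_1(W)^{-1}=r(V)\), that is, \(r_1(W)<r_2^-(W)\). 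In other words, your argument closes exactly under the hypotheses \(r_3^+(W)<r(W)\) and \(r_1(W)<r_2^-(W)\): that is Corollary \ref{inverse2}, not Theorem \ref{rational}. Under the bare hypothesis that \(W\) and \(W^{-1}\) are strongly compact no such spectral gap is available (Theorem \ref{shift} is only a sufficient condition, so strong compactness does not imply \(r_3^+(W)<r(W)\)), and your proposal never uses the assumed strong compactness of \(W\) to control the forward orbits. That is a genuine gap.

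The comparison with the paper's own proof is instructive, because the obstruction you identify is real and sits exactly where the paper is briefest. The paper performs the same orthogonal splitting and then asserts \(C_k\subseteq C_k^++C_k^-\), where \(C_k^+=\{p(W)e_k:\|p(W)\|\le1\}\) and \(C_k^-=\{q(W^{-1})e_k:q(0)=0,\,\|q(W^{-1})\|\le1\}\) are precompact by hypothesis, citing only the vector inequalities \(\|p(W)e_k\|\le\|p(W)e_k+q(W^{-1})e_k\|\) and \(\|q(W^{-1})e_k\|\le\|p(W)e_k+q(W^{-1})e_k\|\). But since the weights are nonzero, the polynomial \(p\) is uniquely determined by the vector \(p(W)e_k\), so \(p(W)e_k\in C_k^+\) forces \(\|p(W)\|\le1\); hence the claimed containment is equivalent to the operator-norm inequality \(\|p(W)\|\le\|p(W)+q(W^{-1})\|\), which is exactly the contractivity of the analytic projection that you rightly call out as false in general (for the unweighted bilateral shift it amounts to boundedness of the Riesz projection in the supremum norm, which fails). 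So your diagnosis of the difficulty is accurate, and the paper's ``now it follows'' does not follow from the displayed vector inequalities alone; but a diagnosis is not a proof. To establish Theorem \ref{rational} as stated one must extract from the strong compactness of \(W\) and \(W^{-1}\) themselves --- not from the spectral picture alone --- a uniform control on the forward and backward parts of norm-one Laurent polynomials, and neither your proposal nor the paper's two-line justification supplies that step.
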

\begin{proof}
It suffices to show that  the set
\[
C_k=\{p(W)e_k+q(W^{-1})e_k: p,q \textrm{ are polynomials, }q(0)=0, \|p(W)+q(W^{-1})\| \leq 1\}
\]
is precompact for every integer \(k\). Since \(W\) and \(W^{-1}\) are both strongly compact, the sets
\begin{eqnarray*}
C_k^+ & = & \{p(W)e_k: \textrm{\(p\) is a polynomial},\, \|p(W)\| \leq 1\} \\
C_k^- & = & \{q(W^{-1})e_k : \textrm{\(q\) is a polynomial}, \,q(0)=0, \,\|q(W^{-1})\| \leq 1\}
\end{eqnarray*}
are both precompact. Notice that for every pair of polynomials \(p,q\) with \(q(0)=0\) we have
\(\|p(W)e_k\| \leq \|p(W)e_k+q(W^{-1})e_k\|\) and \(\|q(W^{-1})e_k\| \leq \|p(W)e_k+q(W^{-1})e_k\|\). Now it follows that \(C_k \subseteq C_k^+ + C_k^-\) so that \(C_k\) is precompact, as we wanted.
\end{proof}
\begin{corollary}
\label{inverse2}
Let \(W\) be an invertible bilateral weighted shift on an infinite dimensional, separable complex Hilbert space. If  both inequalities \(r_3^+(W)<r(W)\) and \(r_1(W) < r_2^-(W)\) are fulfilled  then the algebra generated by \(W\) and \(W^{-1}\) is strongly compact.
\end{corollary}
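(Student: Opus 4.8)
The plan is to obtain the statement by directly chaining the three results that immediately precede it, since each of the two hypotheses feeds into exactly one of them. First I would note that the inequality \(r_3^+(W)<r(W)\) is precisely the hypothesis of Theorem \ref{shift}; applying that theorem yields at once that \(W\) is a strongly compact operator. Next, the inequality \(r_1(W)<r_2^-(W)\) is precisely the hypothesis of Corollary \ref{inverse1}, and invoking it gives that the inverse \(W^{-1}\) is a strongly compact operator. Here one leans on the unitary conjugation \(V=U^\ast W^{-1}U\) with \(Ue_n=e_{-n}\) employed in the proof of that corollary, so no additional computation with the weights is required.

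With both \(W\) and \(W^{-1}\) now known to be strongly compact operators, the concluding step is an appeal to Theorem \ref{rational}, which asserts exactly that the algebra generated by an invertible bilateral weighted shift and its inverse is strongly compact whenever both operators are strongly compact. Since the invertibility of \(W\) is assumed in the statement, Corollary \ref{inverse1} and Theorem \ref{rational} are both legitimately available, and the chain closes.

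Because the corollary is a formal consequence of three already-established results, I do not expect any genuine analytical obstacle. The only point deserving care is to confirm that the two displayed inequalities are verbatim the hypotheses of Theorem \ref{shift} and Corollary \ref{inverse1}, with no concealed spectral or invertibility side-conditions that would need to be verified separately; once this bookkeeping is checked, the argument is immediate.
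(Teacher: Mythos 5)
Your proposal is correct and follows exactly the paper's own argument: Theorem \ref{shift} gives strong compactness of \(W\) from \(r_3^+(W)<r(W)\), Corollary \ref{inverse1} gives strong compactness of \(W^{-1}\) from \(r_1(W)<r_2^-(W)\), and Theorem \ref{rational} then yields the conclusion. There are no hidden side-conditions to check, so the chaining is immediate, just as in the paper.
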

\begin{proof} Since \(r_3^+(W)<r(W)\), it follows from Theorem \ref{shift} that \(W\) is strongly compact, and since  \(r_1(W) < r_2^-(W)\), it follows from Corollary \ref{inverse1} that \(W^{-1}\) is also strongly compact. Hence, the desired result is  a consequence of Theorem \ref{rational}.
\end{proof}
\noindent
{\em Example.} Consider a bilateral weighted shift \(W\) whose sequence of weights \((w_n)\) satisfies the conditions
 \(1 \leq |w_n| \leq 2\) for each \(n \geq 0\) and \(w_n=1\) for each \(n<0\). It is plain that such a weighted shift is bounded and  invertible, and 	that \(\|W^{-1}\| =1\). We claim that \(W^{-1}\) fails to be strongly compact, for otherwise the sequence \(e_{-n}= W^{-n}e_0\) would have a norm convergent subsequence as \(n \rightarrow \infty\), a contradiction. Then, the sequence of weights can be chosen in such a way that
 \[
 \limsup_{n \rightarrow \infty} |w_0 \cdots w_{n-1}|^{1/n} =1 \hskip2em \text{ and } \hskip2em  \lim_{n \rightarrow \infty} \sup_{k \geq 0} |w_{k} \cdots w_{n+k-1}|^{1/n}=2.
 \]
There are many possible  choices for such a sequence of weights, for instance, if we set  \(w_k=2\) whenever \(2^n \leq k \leq 2^n+n\) and \(w_k=1\) otherwise. Therefore, \(r_3^+(W)=1\) and \(r(W)=r^+(W)=2\). It follows from Theorem \ref{shift}  that \(W\) is a strongly compact operator. Summarizing, we  constructed a strongly compact bilateral weighted shift  whose inverse fails to be strongly compact.
\vskip1ex
\noindent
{\em Acknowledgement.} We are indebted to Fernando Le\'on-Saavedra for his kind invitation to visit  Universidad de C\'adiz at Jerez de la Frontera during the preparation of this manuscript. We had many conversations that made possible the present work. We would like to express our sincere gratitude for his generosity and his hospitality.

\vskip3em
\begin{tabular}{lcl}
Miguel Lacruz & \hskip3em&  Mar\'{\i}a del Pilar Romero de la Rosa\\
Departamento de An\'alisis & \hskip3em & Departamento de Matem\'aticas\\
Universidad de Sevilla\ & \hskip3em &Universidad de C\'adiz\\
Avenida Reina Mercedes & \hskip3em &Avenida de la Universidad\\
41012 Seville (SPAIN) & \hskip3em & 11402 Jerez de la Frontera (SPAIN)\\
{\tt lacruz@us.es} & \hskip3em &{\tt pilar.romero@uca.es}
\end{tabular}

\end{document}